\numberwithin{equation}{section}
\newtheorem{theorem}[equation]{Theorem}
\newtheorem{lemma}[equation]{Lemma}
\newtheorem{conjecture}{Conjecture}
\newtheorem{definition}[equation]{Definition}
\theoremstyle{definition}
\theoremstyle{remark}
\DeclareMathOperator{\image}{\mathrm{image}}
\begin{document}
% \href{http://yashamon.github.io/web2/papers/conformalsymplectic.pdf}{Direct link to author's version}
\title{A remark on deformation of Gromov non-squeezing}
\author{Yasha Savelyev}
% \thanks {Partially supported by PRODEP grant}
\email{yasha.savelyev@gmail.com}
\address{University of Colima, Faculty of science}
\keywords{non-squeezing, Gromov-Witten theory}
\subjclass[2000]{53D45}
\begin{abstract} Let $R,r$ be as in the classical Gromov
non-squeezing theorem, and let	$\epsilon = (\pi R ^{2} - \pi
r ^{2})/ \pi r ^{2} $. We first conjecture that the Gromov
non-squeezing phenomenon persists for deformations of the
symplectic form on the range  $C ^{0}$ (w.r.t. the standard metric) $\epsilon $-nearby to the standard symplectic form.    
We prove this in some special cases, in particular when the
dimension is four and when $R <  \sqrt 2 r$. Given such a perturbation, we can no longer compactify the range and hence the classical Gromov argument breaks down. Our main method consists of a certain trap idea for holomorphic curves, analogous to traps in dynamical systems. 
\end{abstract}
\maketitle
 \section {Introduction}
One of the most important results to this day in symplectic
geometry is the so called Gromov non-squeezing theorem,
appearing in the seminal paper of Gromov~\cite{cite_GromovPseudoholomorphiccurvesinsymplecticmanifolds}. 
Let $\omega _{st} = \sum _{i=1} ^{n} dp _{i} \wedge dq _{i}$
denote the standard symplectic form on $\mathbb{R} ^{2n} $. 
Gromov's theorem then says that there does
not exist a symplectic embedding $$(B _{R}, \omega _{st})
\hookrightarrow (S ^{2} \times \mathbb{R} ^{2n-2}, \omega
_{\pi r ^{2}} \oplus \omega _{st}),
$$ for $R>r$, with
$ B _{R}  $ the standard closed radius $R$ ball in
$\mathbb{R} ^{2n} $ centered at $0$, and $\omega _{\pi
r ^{2}}$  a symplectic form on $S ^{2}$ with area $\pi r ^{2}$. 
It is very natural to conjecture the following simple
extension.
\begin{conjecture} \label{con_Gromov}
Let $R>r>0$ be given, set $\epsilon = (\pi R ^{2} - \pi r ^{2})/
\pi r ^{2}$ and let $\omega = \omega _{\pi r ^{2}} \oplus \omega _{st}$ be the
symplectic form on $M=S ^{2} \times \mathbb{R} ^{2n-2} $ as
above. Then for any symplectic form
$\omega'$ on $M$,  $C ^{0} $ $\epsilon $-close to $\omega$,
there is no symplectic embedding $\phi: (B _{R}, \omega
_{st}) \hookrightarrow (M, \omega')$, meaning that $\phi
 ^{*}\omega'=\omega _{st}$. \footnote {After the
 publication of this note, Spencer Cattalani has found
 a counterexample in dimensions higher than 4, that is $n>2$. We leave the conjecture here unmodified for consistency with the
 published version.}
\end{conjecture} 
Here, the $C ^{0}$ distance is with respect to the metric
$g _{J} = \omega (\cdot, J \cdot)$ on $M$ for $J$ the
standard complex structure, see \eqref{eq_dC0}. The above
$\epsilon $ is of course optimal, for if $c>
\epsilon $ there is a symplectic embedding of $B _{R}$ into
$(M, c \cdot \omega) $. It was pointed out to me by Spencer
Cattalani that the conjecture fails if we replace $S ^{2}$
by the radius $r$ disk, by a very simple argument appearing
in Gromov's original
~\cite{cite_GromovPseudoholomorphiccurvesinsymplecticmanifolds}. 

To prove this we cannot use the classical Gromov-Witten
argument since we cannot compactify the range.
Another idea is needed to get an appropriate compact moduli space of
holomorphic curves. One possible approach is to use
convexity or in other words the maximum principle. This approach can prove some
cases of the conjecture but is very unlikely to yield
a proof of the general case. This is because for a general $\omega'$
as above a compatible almost complex structure may be
forced to be non-standard at infinity. 

There is another approach that when sharpened should yield
the general case of the conjecture, at least in dimension 4. This is based 
on a simple idea of holomorphic traps
(Definition \ref{def:holomorphictrap}) somewhat analogous to
traps in dynamical systems. We use this to prove some special
cases of the conjecture.
\begin{theorem}  \label{thm:Gromov} 
When $n=2$ the conjecture above
holds in the following three cases:
\begin{enumerate}
\item $R< \sqrt 2 r$.
\item Let $p: S ^{2} \times \mathbb{R} ^{n} \to \mathbb{R}
^{2}  $ be the projection map. There is a continuous deformation
(topology as mentioned above) of symplectic forms $\{\omega _{t}\}$, $t \in
[0,1]$, $\omega' = \omega _{1}$, $\omega _{0} = \omega $. And such that the following is satisfied: there
is a compact $K \subset \mathbb{R} ^{2} $, such that for
each $x \in \mathbb{R} ^{2} - K$, and each $t$, $\omega _{t}$ is
non-degenerate on $p ^{-1} (x) $.
\item There is a continuous deformation of symplectic forms
$\{\omega _{t}\}$, $t \in
[0,1]$, $\omega' = \omega _{1}$, $\omega _{0} = \omega $ and
a continuous family of projections $\{p _{t}: M \to
\mathbb{R} ^{2} \}$ s.t. for each $t$ and $x \in \mathbb{R}
^{2} $ $\omega _{t}$ is non-degenerate on $p _{t} ^{-1}
(x)$.
\end{enumerate}
 
\end{theorem}

\section{A trap for holomorphic curves} \label{sec:A trap for holomorphic curves}
For basic notions of $J$-holomorphic curves we refer the
reader to \cite{cite_McDuffSalamonJholomorphiccurvesandsymplectictopology}. \begin{definition}\label{def:holomorphictrap}
Let $(M, J)$ be an almost complex manifold, and $A \in H _{2}
(M) $ fixed. Let $K \subset M$ be a closed subset. Suppose
that for every  $x \in \partial K$ (the topological
boundary) there is a ${J} $-holomorphic, real codimension
$2$, compact submanifold $H _{x} \ni x$ of $M$, satisfying:
\begin{itemize}
	\item  $H _{x} \subset K $.
	% \item $\pi (H _{x}) \subset M$  is a closed submanifold. 
	\item  $A \cdot H _{x} \leq 0$, where the left-hand side
	is the homological intersection number. 
\end{itemize}
We call such a $K$ a \textbf{\emph{$J$-holomorphic trap}}
(for class $A$ curves). 
\end{definition}
\begin{lemma} \label{lemma:K}
Let $M, J$ and $A$ be as above, and $K$ be a $J$-holomorphic
trap for class $A$ curves. Let $u: \Sigma \to M$ be
a $J$-holomorphic curve in class $A$, with $\Sigma
$ a connected closed Riemann surface. Then $$(\image u \cap
K) \neq \emptyset \implies \image u \subset K.$$  
\end{lemma}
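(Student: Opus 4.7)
The plan is to argue by contradiction, combining a connectedness observation on $\Sigma$ with positivity of intersections between $J$-holomorphic submanifolds of complementary dimension. Suppose $\image u\cap K\neq\emptyset$ but $\image u\not\subset K$. Since $K$ is closed, $M$ decomposes as the disjoint union of the open interior $\mathrm{int}(K)$, the topological boundary $\partial K$, and the open complement $M\setminus K$. Pulling this back via $u$ gives a decomposition of $\Sigma$ into two disjoint open subsets and one closed subset. Because $\Sigma$ is connected and both $\image u\cap K$ and $\image u\setminus K$ are nonempty by assumption, the two open pieces cannot exhaust $\Sigma$, so there must exist $x\in \image u\cap\partial K$. (I would phrase this carefully; it is essentially the standard observation that a connected set meeting both a closed set and its complement must meet the boundary.)

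At such an $x$, the trap hypothesis supplies a $J$-holomorphic, real codimension $2$, compact hyperplane $H_x$ with $x\in H_x\subset K$ and $A\cdot H_x\le 0$. I would now split into two cases. If $\image u\subset H_x$, then $\image u\subset K$ directly, contradicting the assumption $\image u\not\subset K$. Otherwise, $u$ meets $H_x$ at the isolated point $x$ (together with possibly other isolated intersections) and is not contained in $H_x$.

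In this second case I invoke positivity of intersections for $J$-holomorphic submanifolds of complementary dimension: each isolated intersection of a $J$-holomorphic curve with a $J$-holomorphic hypersurface contributes a strictly positive integer to the homological intersection number, and all contributions have the same sign. In real dimension $4$ this is the standard Gromov/McDuff positivity; in higher dimensions one would appeal to the Micallef--White local form of $J$-holomorphic maps. Since $x\in \image u\cap H_x$ contributes at least $1$ and every other intersection contributes $\ge 0$, one obtains $A\cdot H_x=[u]\cdot [H_x]\ge 1$, contradicting $A\cdot H_x\le 0$.

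The only non-routine ingredient is invoking positivity of intersections in the correct form for the ambient dimension; everything else is point-set topology and the defining properties of the trap. I expect no genuine obstacle here, since the hyperplane $H_x$ is explicitly $J$-holomorphic and the curve $u$ is a genuine smooth $J$-holomorphic map, which is exactly the setting where positivity applies.
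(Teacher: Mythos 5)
Your proof is correct and follows essentially the same route as the paper's: use connectedness of $\image u$ to produce a point of $\image u\cap\partial K$ (or else conclude containment in the interior), then apply positivity of intersections with the $J$-holomorphic hypersurface $H_x$ together with $A\cdot H_x\le 0$ to force $\image u\subset H_x\subset K$. The paper phrases this directly rather than by contradiction, but the two ingredients and their use are identical.
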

\begin{proof}

Suppose that ${u} $ intersects $\partial K$, otherwise we
already have $\image {u} \subset interior (K) $, since $\image
{u}$ is connected (and by elementary topology).  Then ${u}
$ intersects $H _{x} $ as in the definition of a holomorphic
trap, for some $x$.
Consequently, as $A \cdot H _{x} \leq 0$, by positivity of
intersections \cite [Section
2.6]{cite_McDuffSalamonJholomorphiccurvesandsymplectictopology}, $\image u \subset H _{x} \subset  K$. 
\end{proof}

\section{Proof of Theorem \ref{thm:Gromov}}
\label{section:mainargument} 
\begin{definition} \label{def:compatible} A pair $(\omega, J)$ of a
2-form $\omega$ on a smooth manifold $M$ and an almost complex structure
$J$ on $M$ are \textbf{\emph{compatible}} if $\omega (\cdot,
J \cdot)$ defines a $J$-invariant inner product $g _{\omega,
J}$ on $M$. 
\end{definition}
Let us quickly recall the definition of the $C ^{0}$  distance 
$d _{C ^{0}}$, on the set of 2-forms $\Omega ^{2}
(M) $ for a fixed metric $g$ on $M$.
\begin{equation} \label{eq_dC0}
d _{C ^{0}} (\omega _{0}, \omega _{1}) = \sup _{|v \wedge w| _{g} = 1} |\omega
_{0} (v, w) - \omega _{1} (v,w)   |,
\end{equation}
where more specifically, the supremum is over all
$g$-norm 1 simple bivectors $v \wedge w$ in $\Lambda ^{2} (TM) $.  

Let $\omega$ be the symplectic form on $M = S ^{2} \times
\mathbb{R} ^{2}  $ as in the statement of Conjecture
\ref{con_Gromov}. In our case $d _{C ^{0}}$  will be defined with respect to the metric $g _{\omega,J}$  as in Definition \ref{def:compatible} for $J$ the standard product complex structure.

We now prove the second case of the theorem.
Let $\epsilon =(\pi R ^{2} - \pi r ^{2})/ \pi r ^{2}$.
Suppose by contradiction that there is a $d _{C
^{0}}$-continuous family $\{\omega _{t}\}$ of symplectic
forms s.t. 
\begin{itemize}
\item $d _{C ^{0}} (\omega, \omega _{1}) < \epsilon$.
\item There exists a symplectic embedding $$\phi: (B _{R},
w _{st}) \hookrightarrow (M, \omega _{1}). $$ 
\item For each $t \in [0,1] $, $\omega _{t}$ is
	non-degenerate on the fibers $M _{x}$ of
	the projection $$p: M  \to \mathbb{R} ^{2}, $$ for $x \in
	\mathbb{R} ^{2} - K'$ for some
	compact $K' \subset \mathbb{R} ^{2} $.  
\end{itemize}
Set $B:= \phi (B _{R}) $ and let $D ^{\circ} \supset (p(B)
\cup K')$ be an open standard disk in $\mathbb{R} ^{2} $, and
let $D$ denote its closure.
So $K = S ^{2} \times D$ is a compact subset of $M$, with
the properties:
\begin{enumerate}
	\item $\partial K$ is smoothly foliated by the fibers $M
	_{x}$.
	\item For each $t$, $\omega  _{t}$ is non-degenerate on the fibers $M _{x}$ contained in $\partial K$.
\end{enumerate}
We denote by $T ^{vert} \partial K \subset TM  $, the sub-bundle of vectors tangent to the leaves of the above-mentioned foliation. 
%
% We will first setup certain kinds of domains, in
% the sense of Theorem \ref{thm:noSkycatastrophe}.  

Let $j$ be the standard complex structure on $B _{R}$. We may extend $\phi _{*}j $ to an $\omega _{1} $-compatible
almost complex structure $J _{1} $ on $M$, preserving $T ^{vert} \partial K $ using:
\begin{itemize}
  \item $\image \phi$ does not
intersect $\partial K$.
	\item The non-degeneracy of $\omega _{1}$ on the fibers. 	\item The well known existence/flexibility results for
	compatible almost
complex structures on symplectic vector bundles, see for
instance
~\cite[Section
2.6]{cite_McDuffSalamonIntroductiontosymplectictopology}. 
 \end{itemize}
We may then extend $J _{1}$ to an appropriately
\footnote{Because $M$ is not compact, we need to treat the
space of almost complex structures as a nuclear LF manifold,
~\cite{cite_MichorNuclearLF} rather than a Frechet manifold.
However, this is only cosmetic (in the setup of the moment) since we are only interested
in the behavior over a fixed compact set. Thus, we could
also use a modified Frechet topology induced by choosing
a fixed compact.} smooth family $\{J _{t} \}  $,
$t \in [0,1] $, of almost complex structures on $M$, s.t. $J
_{t} $ is $\omega _{t} $-compatible for each $t$, with $J
_{0} = J $ as above, and such that $J
_{t} $ preserves $T ^{vert} \partial K $ for each $t$. 
The latter condition can be satisfied by similar reasoning
as above, using that $\omega _{t}$ is non-degenerate on the
fibers $M _{x}$, contained in $\partial K$ for each $t$.  

Such fibers are $J _{t}$-holomorphic hypersurfaces for
each $t$, and smoothly foliate $\partial K$.
Moreover, if $A = [S ^{2} ] \otimes [pt] \in H _{2} (M)$  then the intersection number of $A$ with a fiber
is 0. That is $A \cdot p ^{-1} (z) = 0$, for $\forall z \in
\mathbb{R} ^{2} $.   And so $K$ is a compact $J _{t}$-holomorphic trap for class $A$ curves, for each $t$.  

Set $x _{0} := \phi (0) $. Denote by $\mathcal{M} _{t}$ the
space of equivalence classes of maps $u: \mathbb{CP} ^{1}
\to M$, where $u$ is a $J _{t}$-holomorphic, class $A$ curve passing through $x _{0}$.
The equivalence relation is by the usual
biholomorphism reparametrization group action, so that $u
\sim u'$ if there exists a biholomorphism $f:
\mathbb{CP} ^{1} \to \mathbb{CP} ^{1} $ s.t. $u' = u \circ
f$.  Then $\mathcal{M} = \cup _{t} \mathcal{M}
_{t}$  is compact by energy minimality of $A$ (which rules
out bubbling), by Lemma \ref{lemma:K}, and by compactness of
$K$. 

To use Gromov-Witten type curve counts, we need to regularize. We may use the ``standard'' Banach
approach. This has the advantage of being readily understood
by experts but a possible disadvantage of appearing opaque
and ad hoc to new-comers to the field.  
For this reason
we will also give an independent argument using polyfold theory.
\subsection{Banach approach} \label{sec_Banach approach}
This is based on ~\cite{cite_McDuffSalamonJholomorphiccurvesandsymplectictopology}
and the picture is as follows.
Let $\mathcal{B} $ be the universal Banach moduli space of
class $A$ curves:
\begin{equation*}
\mathcal{B} = \mathcal{M} ^{*} (A, \mathcal{J} ^{l})
: = \{(u, J) \,|\, J \in \mathcal{J} ^{l},  \text{$u:
\mathbb{CP} ^{1} \to M$ is a simple class $A$
$J$-holomorphic curve} \},
\end{equation*}
where $\mathcal{J} ^{l}$ is the space of class $C ^{l}$
almost complex structures, taking $l$ to be sufficiently large.
Then we have an evaluation map $ev: \mathcal{B} \to M$, $(u,
J) \mapsto u (z _{0})$, for $z _{0} \in \mathbb{CP} ^{1}
$ fixed.   
Let $\pi _{}: \mathcal{B} \to \mathcal{J} ^{l}$ be the
projection.

The product map $$\mathcal{B} \xrightarrow{ev \times
\pi _{} } M \times \mathcal{J} ^{l}$$ is a Fredholm map.
There is one immediate problem: given $(x _{0}, J) \in M \times
\mathcal{J} ^{l}$ a priori we may not be able to perturb it to
a regular value of the form $(x', J')$ (that is we may need
to perturb $x _{0}$ to $x' $). 
% TODO: {mention this}
This would complicate the last step of the
proof of the theorem, which needs specifically a holomorphic
curve through $x _{0}$. 
% Note that abstract perturbation
% techniques as in ~\cite{cite_Pardon} do not instantaneously
% solve the problem, as we need to construct 
% adapted implicit structures specific to our problem (we have
% a compactness issue).
Fortuitously, it turns out that the map $ev$ is always
a submersion, see
[Proposition
3.4.2]~\cite{cite_McDuffSalamonJholomorphiccurvesandsymplectictopology}. 
Thus, there is no need to perturb $x _{0}$.
\begin{lemma} \label{lemma_}
Let $\{J _{t}\}$, $t \in [0,1]$, be the family as
constructed above.  Then there is a path $p': [0,1] \to
M \times \mathcal{J} ^{l}$, $t \mapsto (x _{0}, J'_{t})$,
such that:
\begin{enumerate}
	\item $ev \times \pi _{} $ is transverse to $p'$ in the standard
	differential topology sense, (this is equivalent to $\{J' _{t}\}$
	being a regular homotopy, as defined in
	~[Definition
	3.1.7]\cite{cite_McDuffSalamonJholomorphiccurvesandsymplectictopology}).  
	\item $J' _{t} $ is $\omega _{t}$-compatible for
	each $t$.
	\item $J' _{t} $ preserves $T ^{vert} \partial K $ for
	each $t$. \label{cond_preserveses}
	\item $J'  _{0} = J$.
\end{enumerate}
\end{lemma}
\begin{proof} [Proof]
Only condition \eqref{cond_preserveses} requires an explanation. To see that this can be satisfied,
take open $U _{1}, U _{2} \subset M$, homeomorphic to an open ball in
$\mathbb{R} ^{4} $, with $\overline{U}  _{1} \subset U _{2}$
with $B \subset U _{1}$ and with $U _{2} \subset K$.
Now any closed $J$-holomorphic
curve which intersects $U _{1}$ must intersect $U _{2}
- \overline{U}  _{1}$. As otherwise we contradict $H _{2} (U
_{1}, \mathbb{Z}) =0 $, since the homology class of closed,
non-constant, $J$-holomorphic curve is never zero, for $J$ compatible with a symplectic form.
Thus, the family $\{J _{t}\}$ can be 
be regularized by perturbing only within the region $U _{2}
- \overline{U}  _{1}$ cf. 
~\cite[proof of
Lemma 3.4.4]{cite_McDuffSalamonJholomorphiccurvesandsymplectictopology}. In this case, \eqref{cond_preserveses} will
be automatically satisfied. 
\end{proof}
For $p'$ as in the lemma, define $\mathcal{M}'$ to be the preimage $(ev \times \pi _{}) ^{-1} (\image p')/ \sim$,  where
$\sim $ is the following equivalence relation. $u \sim u'$
if there is a biholomorphism $f:
\mathbb{CP} ^{1} \to \mathbb{CP} ^{1} $ s.t. $u' = u \circ
f$ and s.t. $f (z _{0}) = z _{0}$. Then $\mathcal{M}'$ is
a compact one dimensional manifold. 

The boundary component $(ev \times \pi _{}) ^{-1} (x _{0}, J)/ \sim$ is
a point corresponding to the single $J$-holomorphic, class
$A$ curve passing through $x _{0}$. It follows that the
boundary component $(ev \times \pi _{}) ^{-1} (x _{0}, J')/ \sim$
is likewise non-empty. Then let $u _{0} \in (ev \times \pi
_{} ) ^{-1} (x _{0}, J')/ \sim$, we will use this
further ahead.

\subsection{Polyfold approach} \label{sec_Polyfold approach}
% Note that in the classical Gromov setting, where $M$ is
% compact we can instead regularize the unconstrained moduli
% space, and then take the homological intersection of the
% evaluation at a point (i.e. take the Gromov-Witten
% invariant). But we cannot do that here, since the
% unconstrained moduli space in our case is not compact.
Alternatively, we may use Hofer-Wysocki-Zehnder polyfold
regularization in Gromov-Witten theory, especially as
recently worked out in this present context by the team of
Franziska Beckschulte, Ipsita Datta, Irene Seifert,
Anna-Maria Vocke, and Katrin Wehrheim. We can also of course
use other virtual approaches, but this is not instantaneous,
for example if we were to invoke Pardon~\cite{cite_Pardon} then we
would have needed to construct implicit
atlases in the constrained case (this can be done of
course).

As explained in ~\cite[Section 3.5]{cite_PolyfoldGromov}, in a essentially identical situation, we may embed
$\mathcal{M}$ into a natural polyfold setup of
Hofer-Wysocki-Zehnder ~\cite{cite_HWZapplicationsPolyfolds}.
More to the point, we express $\mathcal{M}$ as the zero set of an
sc-Fredholm section of a suitable (tame, strong) $M$-polyfold bundle.  
The only difference with the setup of ~\cite[Section 3.5]{cite_PolyfoldGromov} is that they compactify 
$M$ to $S ^{2} \times T ^{2}$, to get a compact moduli
space. We of course cannot
compactify, but remember that we used the
holomorphic trap idea to force compactness of
$\mathcal{M}$. And so we are in an equivalent situation.

Again as in ~\cite{cite_PolyfoldGromov}, we take the $M$-polyfold regularization of $\mathcal{M}$. This gives a one
dimensional compact cobordism $\mathcal{M} ^{reg}$  between $\mathcal{M} ^{reg} _{0}$ and
$\mathcal{M} ^{reg} _{1}$.  

Now $\mathcal{M} ^{reg} _{0}$ is a point: corresponding to the unique $(J=J _{0})$-holomorphic class $A$,  curve $u: \mathbb{CP} ^{1} \to M$ passing through $x
_{0}$.  Consequently, $\mathcal{M} ^{reg} _{1}$ is non-empty, that is there is a $J _{1}$-holomorphic class $A$ curve $u _{0}:
\mathbb{CP} ^{1} \to M$ passing through $x _{0}$.
\subsection{Finishing the proof} \label{sec_Finishing the proof}
% \begin{remark} \label{remark:polyfold}
% 	It is certainly possible that more classical, geometric
% 	perturbation style arguments may be adopted to
% 	the present problem. There are however difficulties: it is important for us to work with curves
% 	constrained to pass through a specific point, instead of doing
% 	homological intersection of an unconstrained evaluation
% 	cycle, with a point (as in the classical proof of
% 	Gromov non-squeezing). For without the specific constraint our moduli
% 	space is not even compact, and hence the
% 	homological intersection theory makes no sense. Such
% 	a constraint may not neatly fit into classical analytical
% 	framework of McDuff-Salamon
% 	~\cite{cite_McDuffSalamonJholomorphiccurvesandsymplectictopology}. 
% \end{remark}

Now $\langle \omega, A \rangle = \pi _{} \cdot r ^{2}
$, and we have a representative for $A$ whose $g$-area is
$\pi r ^{2}$.
So we have: $$  |\langle {\omega} _{1}, A  \rangle - \pi \cdot r ^{2}| = |\langle {\omega} _{1}, A  \rangle - 
\langle {\omega}, A  \rangle | < \epsilon  \pi \cdot r ^{2}
=\pi R ^{2} - \pi r^{2},$$
where the inequality uses that $d _{C ^{0}} ({\omega}, {\omega} _{1} ) <\epsilon$. 
So we get $$|\int _{\mathbb{CP} ^{1} } u _{0} ^{*} \omega
_{1} - \pi r ^{2}    |   < \pi R ^{2} - \pi r^{2}.   $$   
And consequently, $$\int _{\mathbb{CP} ^{1} } u _{0} ^{*}
\omega _{1} < \pi R ^{2}. $$

We may then proceed exactly as in the now classical proof
of Gromov~\cite{cite_GromovPseudoholomorphiccurvesinsymplecticmanifolds.} of the non-squeezing theorem to get a contradiction and
finish the proof. More specifically, $\phi ^{-1}
({\image \phi \cap \image u _{0}})  $ is a  minimal surface in
$B _{R}  $, with boundary on the boundary of $B _{R} $,
and passing through $0 \in B _{R} $. By construction it
has area strictly less than $\pi R ^{2} $, which is
impossible by the classical monotonicity theorem of
differential geometry. See also
\cite[Lemma A.2]{cite_PolyfoldGromov} where the
monotonicity theorem is suitably generalized, to better
fit the present context.
	
This finishes the proof of the second case. 
To prove the first case, note that if $v$ is a $g$-unit
vector then $\omega (v, Jv) = 1$. If $\epsilon = (\pi R ^{2}
- \pi r^{2})/ \pi _{} r ^{2} $ and $\pi _{} R ^{2} < 2 \pi
r ^{2} $ then $\epsilon <1$.
And so if $\omega'$ is $\epsilon$ close to $\omega $ then
$\omega'  (v, Jv) >0$. It follows that:
\begin{itemize}
	\item $\omega _{t} = (1-t) \omega + t \omega'$ is non-degenerate, for each $t \in [0,1] $.  
	\item For each $t \in [0,1] $, $\omega _{t}$ is
	non-degenerate on the fibers $M _{x}$ of
	the natural projection $$p: (M = S ^{2} \times \mathbb{R}
	^{2}) \to \mathbb{R} ^{2}, $$  for all $x \in \mathbb{R}
	^{2} $.
\end{itemize}
So the family $\{\omega _{t}\}$ satisfies the hypothesis of
the second case taking $K= \emptyset  $, and the conclusion follows. 

For case 3, suppose we are given such a family of projections $p _{t}$,
and let $D \subset \mathbb{R} ^{2} $, be a closed disk
constructed as in the proof of case 2. Define $\widetilde{p}: M \times [0,1]
\to \mathbb{R} ^{2} $ by $\widetilde{p} (x,t) = p _{t} (x)$,
then by assumptions $\widetilde{p} $ is continuous. 
Define the compact subset of $M \times [0,1]$:
$$K _{D} = \widetilde{p} ^{-1} (D). $$ 
As in the proof of case 2, we get  
a family $\{J _{t}\}$ of $\omega _{t}$ compatible
almost complex structures, s.t. $K _{D} $ is trapping for corresponding cobordism
moduli space. That is for each $(u,t) \in \mathcal{M} $ the
image of $u$ is contained in $K _{D} \cap M \times \{t\}$.
It follows that $\mathcal{M} $ is compact, then proceed
as in the proof of case 2.
\qed
\section{Acknowledgements} 
% TODO: {fix}
I am grateful to Helmut Hofer, Bulent Tosun,  and Semon
Rezchikov for an interesting discussion as well as Felix
Schlenk, Misha Gromov and Dusa McDuff
for some feedback. Thanks also to the referee for careful
proof reading. 
% I give thanks to Kevin Sackel for a number of discussions on
% related topics.
% interest and extensive discussions of related topics, to Emmy Murphy and Richard Hind for
% early feedback, and Vestislav Apostolov for kindly answering
% some questions.  
% I thank Yong-Geun Oh for a number of discussions on related topics, and for an invitation to IBS-CGP, Korea. Thanks also to John Pardon for receiving me during a visit in Princeton. I also thank Dusa McDuff for comments on earlier versions and Kaoru Ono, Emmy Murphy, Viktor Ginzburg, Yael Karshon, Helmut Hofer and Richard Hind, for helpful discussions on related topics. 
\bibliographystyle{siam}  
%  \bibliography{/root/texmf/bibtex/bib/link}  
 % \bibliography{/home/yashasavelyev/texmf/bibtex/bib/link} 
\bibliography{link} 
% \bibliography{/home/yasha/texmf/bibtex/bib/link} 
\end{document}